\newtheorem{theorem}{Theorem}
\theoremstyle{plain}
\numberwithin{equation}{section}
\begin{document}
\title[New Ostrowski-Type Inequality]{A New Ostrowski-Type Inequality for
Double Integrals}
\author{$^{\blacktriangledown }$M. Emin \"{O}zdemir}
\address{$^{\blacktriangledown }$Ataturk University, K. K. Education
Faculty, Department of Mathematics, 25640, Kampus, Erzurum, Turkey}
\email{emos@atauni.edu.tr}
\author{$^{\spadesuit ,\bigstar }$Ahmet Ocak Akdemir}
\curraddr{$^{\bigstar }$A\u{g}r\i\ \.{I}brahim \c{C}e\c{c}en University,
Faculty of Science and Arts, Department of Mathematics, 04100, A\u{g}r\i ,
Turkey}
\email{ahmetakdemir@agri.edu.tr}
\author{$^{\blacksquare }$Erhan Set}
\address{$^{\blacksquare }$Ataturk University, K. K. Education Faculty,
Department of Mathematics, 25640, Kampus, Erzurum, Turkey}
\date{October 20, 2010}
\subjclass[2000]{Primary 26D15, 26A51}
\keywords{Ostrowski Inequality\\
$^{\spadesuit }Corresponding$ $Author$}

\begin{abstract}
In this paper, we established a new Ostrowski-type inequality involving
functions of two independent variables.
\end{abstract}

\maketitle

\section{INTRODUCTION}

In \cite{OST}, Ostrowski proved the following inequality.

\begin{theorem}
Let $f:I\rightarrow 
\mathbb{R}
,$ where $I\subset 
\mathbb{R}
$ is an interval, be a mapping differentiable in the interior of $I$ and $%
a,b\in I^{o},a<b.$ If $\left\vert f^{\prime }\right\vert \leq M,\forall t\in
\lbrack a,b],$ then we have%
\begin{equation}
\left\vert f(x)-\frac{1}{b-a}\dint\limits_{a}^{b}f(t)dt\right\vert \leq %
\left[ \frac{1}{4}+\frac{\left( x-\frac{a+b}{2}\right) ^{2}}{\left(
b-a\right) ^{2}}\right] \left( b-a\right) M,  \label{1.1}
\end{equation}%
for $x\in \lbrack a,b].$
\end{theorem}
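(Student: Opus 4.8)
The plan is to reduce the two-sided estimate to a single integral identity generated by a piecewise-linear Peano kernel, and then to bound the resulting integral crudely using the hypothesis $\left\vert f^{\prime }\right\vert \leq M$. First I would introduce the kernel
\begin{equation*}
p(x,t)=\left\{
\begin{array}{ll}
t-a, & a\leq t\leq x, \\
t-b, & x<t\leq b,
\end{array}
\right.
\end{equation*}
and prove the identity
\begin{equation*}
\int_{a}^{b}p(x,t)f^{\prime }(t)\,dt=(b-a)f(x)-\int_{a}^{b}f(t)\,dt .
\end{equation*}
To see this, I would split the integral at $t=x$ and integrate by parts on each subinterval; the boundary contributions from $(t-a)f(t)$ on $[a,x]$ and from $(t-b)f(t)$ on $[x,b]$ collapse to $(x-a)f(x)+(b-x)f(x)=(b-a)f(x)$, while the two leftover integrals recombine into $-\int_{a}^{b}f(t)\,dt$. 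Dividing through by $b-a$ isolates exactly the expression appearing on the left of \eqref{1.1}.

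Next I would pass to absolute values, invoke the triangle inequality for integrals, and use $\left\vert f^{\prime }\right\vert \leq M$ to obtain
\begin{equation*}
\left\vert f(x)-\frac{1}{b-a}\int_{a}^{b}f(t)\,dt\right\vert \leq \frac{M}{b-a}\int_{a}^{b}\left\vert p(x,t)\right\vert \,dt .
\end{equation*}
Because $t-a\geq 0$ on $[a,x]$ and $t-b\leq 0$ on $(x,b]$, the absolute value of the kernel equals $t-a$ on the first piece and $b-t$ on the second, so the integral evaluates to $\frac{(x-a)^{2}}{2}+\frac{(b-x)^{2}}{2}$.

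The remaining step is purely algebraic. Writing $u=x-\frac{a+b}{2}$, so that $x-a=u+\frac{b-a}{2}$ and $b-x=\frac{b-a}{2}-u$, the cross terms cancel and I get $(x-a)^{2}+(b-x)^{2}=2u^{2}+\frac{(b-a)^{2}}{2}$. Substituting back yields
\begin{equation*}
\frac{M}{b-a}\left( u^{2}+\frac{(b-a)^{2}}{4}\right) =\left[ \frac{1}{4}+\frac{\left( x-\frac{a+b}{2}\right) ^{2}}{\left( b-a\right) ^{2}}\right] \left( b-a\right) M ,
\end{equation*}
which is precisely the bound in \eqref{1.1}. I do not anticipate a genuine obstacle in this argument: the only points demanding care are the sign analysis used to strip the absolute value from the kernel and keeping the orientation of each subinterval straight during the integration by parts; everything else is routine single-variable calculus. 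This Peano-kernel method is also the natural template to lift to the double-integral setting announced in the abstract.
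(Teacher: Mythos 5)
Your proof is correct, and it is the standard Montgomery-identity proof of Ostrowski's inequality: the integration by parts behind
\begin{equation*}
\int_{a}^{b}p(x,t)f^{\prime }(t)\,dt=(b-a)f(x)-\int_{a}^{b}f(t)\,dt,
\end{equation*}
the evaluation $\int_{a}^{b}\left\vert p(x,t)\right\vert \,dt=\frac{(x-a)^{2}+(b-x)^{2}}{2}$, and the final algebra with $u=x-\frac{a+b}{2}$ all check out and reproduce the right-hand side of \eqref{1.1} exactly. There is, however, nothing in the paper to compare this against: the theorem is quoted without proof from Ostrowski's 1938 paper \cite{OST}, and the only proof the paper contains is that of its own main result in Section 2. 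That proof is, as you anticipated, the two-variable lift of your template --- a piecewise-linear product kernel $p(x,t)q(y,s)$, an integration-by-parts identity, and an exact computation of the integral of the absolute value of the kernel --- but with two notable variations: the paper's kernels vanish at the interior points $\frac{3a+x}{4}$ and $\frac{3b+x}{4}$ rather than at the endpoints $a$ and $b$ as yours do, and before estimating, the paper subtracts the constant $\frac{\Gamma +\gamma }{2}$ from $\frac{\partial ^{2}f}{\partial t\partial s}$, which is what makes its final bound involve $\Gamma -\gamma $ instead of a uniform bound $M$ on the derivative. So your argument is a faithful one-dimensional model of the paper's method, not a reproduction of any proof the paper actually gives.
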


In \cite{ME}, \"{O}zdemir et al. proved inequalities as above for  $\left(
\alpha ,m\right) -$convex functions. In \cite{CHENG}, Cheng proved the
following inequality,

\begin{theorem}
Let $I\subset 
\mathbb{R}
$ be an open interval, $a,b\in I,a<b.$ $f:I\rightarrow 
\mathbb{R}
$ is a differentiable function such that there exist constants $\gamma
,\Gamma \in 
\mathbb{R}
$ with $\gamma \leq f^{\prime }(x)\leq \Gamma ,x\in \lbrack a,b].$ Then we
have%
\begin{eqnarray*}
&&\left\vert \frac{1}{2}f(x)-\frac{\left( x-b\right) f\left( b\right)
-\left( x-a\right) f\left( a\right) }{2\left( b-a\right) }-\frac{1}{b-a}%
\dint\limits_{a}^{b}f(t)dt\right\vert \\
&\leq &\frac{\left( x-a\right) ^{2}+\left( b-x\right) ^{2}}{8\left(
b-a\right) }(\Gamma -\gamma )
\end{eqnarray*}%
for all $x\in \lbrack a,b].$
\end{theorem}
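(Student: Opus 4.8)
The plan is to derive the estimate from a Montgomery-type integral identity together with the elementary observation that $\gamma \le f'(t) \le \Gamma$ forces $f'$ to stay within $\tfrac{\Gamma-\gamma}{2}$ of its mid-range value $\tfrac{\gamma+\Gamma}{2}$. Concretely, I would introduce the Peano kernel
\[
p(x,t)=\begin{cases} t-\dfrac{a+x}{2}, & t\in[a,x],\\[2mm] t-\dfrac{x+b}{2}, & t\in(x,b], \end{cases}
\]
and first prove the identity
\[
\frac{1}{2}f(x)-\frac{(x-b)f(b)-(x-a)f(a)}{2(b-a)}-\frac{1}{b-a}\int_a^b f(t)\,dt=\frac{1}{b-a}\int_a^b p(x,t)f'(t)\,dt .
\]
This is the crux that converts the left-hand side into a single weighted integral of $f'$.

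First I would establish this identity by splitting $\int_a^b p(x,t)f'(t)\,dt$ at $t=x$ and integrating each piece by parts. On $[a,x]$ the factor $t-\tfrac{a+x}{2}$ contributes the boundary term $\tfrac{x-a}{2}\bigl(f(x)+f(a)\bigr)$ and $-\int_a^x f$; on $(x,b]$ the factor $t-\tfrac{x+b}{2}$ contributes $\tfrac{b-x}{2}\bigl(f(b)+f(x)\bigr)$ and $-\int_x^b f$. Adding the two pieces, the $f(x)$ coefficients combine to $\tfrac{b-a}{2}$, and after dividing by $b-a$ the expression matches the claimed left-hand side term by term (using $-(x-b)=b-x$).

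Next comes the Gr\"uss-type sharpening. The decisive feature of this kernel is that $\int_a^x (t-\tfrac{a+x}{2})\,dt=0$ and $\int_x^b (t-\tfrac{x+b}{2})\,dt=0$, since each integrand is odd about the midpoint of its subinterval; hence $\int_a^b p(x,t)\,dt=0$. Because of this I may subtract the constant $\tfrac{\gamma+\Gamma}{2}$ from $f'$ without altering the integral, so that
\[
\frac{1}{b-a}\int_a^b p(x,t)f'(t)\,dt=\frac{1}{b-a}\int_a^b p(x,t)\Bigl(f'(t)-\frac{\gamma+\Gamma}{2}\Bigr)\,dt .
\]
Applying the triangle inequality together with the pointwise bound $\bigl|f'(t)-\tfrac{\gamma+\Gamma}{2}\bigr|\le\tfrac{\Gamma-\gamma}{2}$ then yields
\[
\Bigl|\,\text{LHS}\,\Bigr|\le\frac{1}{b-a}\cdot\frac{\Gamma-\gamma}{2}\int_a^b|p(x,t)|\,dt .
\]

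Finally I would evaluate $\int_a^b|p(x,t)|\,dt$. On each subinterval the kernel is linear and antisymmetric about the subinterval's midpoint, so $\int_a^x|t-\tfrac{a+x}{2}|\,dt=\tfrac{(x-a)^2}{4}$ and $\int_x^b|t-\tfrac{x+b}{2}|\,dt=\tfrac{(b-x)^2}{4}$, giving $\int_a^b|p(x,t)|\,dt=\tfrac{(x-a)^2+(b-x)^2}{4}$. Substituting produces exactly $\tfrac{(x-a)^2+(b-x)^2}{8(b-a)}(\Gamma-\gamma)$. I expect the only real obstacle to be guessing the correct kernel---in particular the shift that places the breakpoints at $\tfrac{a+x}{2}$ and $\tfrac{x+b}{2}$ rather than at $a,b$---since it is precisely this shift that simultaneously produces the correct boundary terms in the identity and guarantees the vanishing mean $\int_a^b p(x,t)\,dt=0$ needed for the $(\Gamma-\gamma)$ improvement; the remaining integrations by parts and the evaluation of $\int|p|$ are routine.
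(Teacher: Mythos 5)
Your proposal is correct, and every step checks out: the integration-by-parts computation does give the boundary terms $\tfrac{x-a}{2}\bigl(f(x)+f(a)\bigr)$ and $\tfrac{b-x}{2}\bigl(f(b)+f(x)\bigr)$, so the identity reproduces the left-hand side exactly (the $f(x)$ coefficients sum to $\tfrac{b-a}{2}$); the zero-mean property $\int_a^b p(x,t)\,dt=0$ holds because each linear piece is odd about the midpoint of its subinterval; the pointwise bound $\bigl|f'(t)-\tfrac{\gamma+\Gamma}{2}\bigr|\le\tfrac{\Gamma-\gamma}{2}$ is immediate from the hypothesis; and $\int_a^b|p(x,t)|\,dt=\tfrac{(x-a)^2+(b-x)^2}{4}$, which yields precisely the constant $\tfrac{1}{8(b-a)}$. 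One caveat about the comparison you were asked to make: the paper does not prove this statement at all; it is quoted as a known result of Cheng \cite{CHENG}, so there is no in-paper proof to measure your argument against. What your argument does match, essentially step for step, is the method the authors use for their own main result in Section 2: there too one builds piecewise-linear Peano kernels with interior breakpoints, converts the target expression into $\int_a^b\int_c^d p(x,t)q(y,s)\frac{\partial^2 f(t,s)}{\partial t\partial s}\,ds\,dt$ by integration by parts, subtracts the mid-range constant $M=\tfrac{\Gamma+\gamma}{2}$, applies the bound $\bigl|\frac{\partial^2 f}{\partial t\partial s}-M\bigr|\le\tfrac{\Gamma-\gamma}{2}$, and evaluates the integral of the absolute value of the kernel. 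The one structural difference is instructive: your kernel breaks at the midpoints $\tfrac{a+x}{2}$, $\tfrac{x+b}{2}$ and therefore has zero mean, so the $(\Gamma+\gamma)$-dependent term vanishes and the final inequality is clean; in the paper's two-dimensional theorem the kernels break at the quarter points $\tfrac{3a+x}{4}$, $\tfrac{3b+x}{4}$, etc., whose mean is nonzero, which is exactly why a term proportional to $\Gamma+\gamma$ must be carried on the left-hand side of the main inequality (\ref{2.1}) there.
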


Similarly, in \cite{UJE}, Ujevic established some double integral
inequalities and in \cite{LIU}, Liu et.al. proved two sharp inequalities of
perturbed Ostrowski-type. Recently, in \cite{ZEKI}, Sar\i kaya established
following integral inequality of Ostrowski-type involving functions of two
independent variables;

\begin{theorem}
Let $f:\left[ a,b\right] \times \left[ c,d\right] \rightarrow 
\mathbb{R}
$ be an absolutely continuous function such that the partial derivative of
order 2 exists and supposes that there exist constants $\gamma ,\Gamma \in 
\mathbb{R}
$ with $\gamma \leq \frac{\partial ^{2}f(t,s)}{\partial t\partial s}\leq
\Gamma $ for all $(t,s)\in \left[ a,b\right] \times \left[ c,d\right] .$
Then, we have%
\begin{eqnarray*}
&&\left\vert \frac{1}{4}f(x,y)+\frac{1}{4}H(x,y)-\frac{1}{2(b-a)}%
\dint\limits_{a}^{b}f(t,y)dt-\frac{1}{2(d-c)}\dint\limits_{c}^{d}f(x,s)ds%
\right. \\
&&-\frac{1}{2(b-a)(d-c)}\dint\limits_{a}^{b}\left[ (y-c)f(t,c)+(d-y)f(t,d)%
\right] dt \\
&&\left. -\frac{1}{2(b-a)(d-c)}\dint\limits_{c}^{d}\left[
(x-a)f(a,s)+(b-x)f(b,s)\right] ds+\frac{1}{2(b-a)(d-c)}\dint\limits_{a}^{b}%
\dint\limits_{c}^{d}f(t,s)dsdt\right\vert \\
&\leq &\frac{\left[ (x-a)^{2}+(b-x)^{2}\right] \left[ (y-c)^{2}+(d-y)^{2}%
\right] }{32(b-a)(d-c)}(\Gamma -\gamma )
\end{eqnarray*}%
for all $(x,y)\in \left[ a,b\right] \times \left[ c,d\right] $ where%
\begin{eqnarray*}
&&H(x,y) \\
&=&\dfrac{(x-a)\left[ (y-c)f(a,c)+(d-y)f(a,d)\right] +(b-x)\left[
(y-c)f(b,c)+(d-y)f(b,d)\right] }{(b-a)(d-c)} \\
&&+\frac{(x-a)f(a,y)+(b-x)f(b,y)}{b-a}+\frac{(y-c)f(x,c)+(d-y)f(x,d)}{d-c}
\end{eqnarray*}
\end{theorem}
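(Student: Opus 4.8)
The plan is to treat this as the two-dimensional analogue of Theorem 2 (Cheng), proved through a \emph{product Peano kernel}. The one-variable identity underlying Cheng's inequality comes from the kernel
\[
P(x,t)=\begin{cases} t-\dfrac{a+x}{2}, & t\in[a,x),\\[2mm] t-\dfrac{x+b}{2}, & t\in[x,b],\end{cases}
\]
which integrates to zero on $[a,b]$ and satisfies, after integration by parts,
\[
\frac{1}{b-a}\int_a^b P(x,t)g'(t)\,dt=\frac{1}{2}g(x)+\frac{(x-a)g(a)+(b-x)g(b)}{2(b-a)}-\frac{1}{b-a}\int_a^b g(t)\,dt .
\]
First I would introduce the analogous kernel $Q(y,s)$ on $[c,d]$ and work with the product $P(x,t)Q(y,s)$.

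The first main step is to establish a Montgomery-type identity: apply the one-variable identity above in the variable $s$ to $g=\partial f/\partial t$, and then apply it in the variable $t$ to each of the resulting edge and boundary functions, so as to show that
\[
\frac{1}{(b-a)(d-c)}\int_a^b\int_c^d P(x,t)Q(y,s)\frac{\partial^2 f(t,s)}{\partial t\,\partial s}\,ds\,dt
\]
equals the expression inside the absolute value on the left-hand side, with $H(x,y)$ collecting exactly the corner values $f(a,c),f(a,d),f(b,c),f(b,d)$ and the edge values $f(a,y),f(b,y),f(x,c),f(x,d)$ generated by the boundary terms of the two integrations by parts. Tracking which factors ($\tfrac12$, $\tfrac14$) and which boundary/integral contributions each partial integration produces is the step I expect to be the main obstacle; everything after the identity is a direct estimate, and in fact the computation is what pins down the exact coefficient on each term.

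Once the identity is in hand, the estimate is immediate. Since $\int_a^b P(x,t)\,dt=0=\int_c^d Q(y,s)\,ds$, the product kernel has vanishing double integral, so I may subtract the constant $\tfrac{\gamma+\Gamma}{2}$ from the second-order partial without changing the value:
\[
\int_a^b\int_c^d P\,Q\,\frac{\partial^2 f}{\partial t\,\partial s}\,ds\,dt=\int_a^b\int_c^d P\,Q\left(\frac{\partial^2 f}{\partial t\,\partial s}-\frac{\gamma+\Gamma}{2}\right)\,ds\,dt .
\]
Using $\left|\dfrac{\partial^2 f}{\partial t\,\partial s}-\dfrac{\gamma+\Gamma}{2}\right|\le\dfrac{\Gamma-\gamma}{2}$ and the factorisation of the double $L^1$-norm, the bound reduces to computing
\[
\int_a^b|P(x,t)|\,dt=\frac{(x-a)^2+(b-x)^2}{4},\qquad \int_c^d|Q(y,s)|\,ds=\frac{(y-c)^2+(d-y)^2}{4}.
\]
Multiplying these, inserting the factor $\dfrac{1}{(b-a)(d-c)}\cdot\dfrac{\Gamma-\gamma}{2}$, and simplifying yields the claimed constant $\dfrac{[(x-a)^2+(b-x)^2][(y-c)^2+(d-y)^2]}{32(b-a)(d-c)}(\Gamma-\gamma)$, which completes the proof.
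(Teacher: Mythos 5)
Your method is the right one, and it is essentially the method of this paper: the paper itself never proves this theorem (it is Theorem 3 of the introduction, quoted from Sar\i kaya \cite{ZEKI}), but the paper's own main result in Section 2 is proved by exactly your scheme --- a piecewise product kernel $p(x,t)q(y,s)$, an identity for $\int_a^b\int_c^d p\,q\,\frac{\partial ^2 f}{\partial t\partial s}\,ds\,dt$ obtained by integration by parts, subtraction of $M=\frac{\gamma +\Gamma }{2}$, and the estimate by $\max \left\vert \frac{\partial ^2 f}{\partial t\partial s}-M\right\vert \cdot \int_a^b\int_c^d\left\vert p\,q\right\vert \,ds\,dt$. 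The only organizational difference is that the paper integrates by parts directly over the four subrectangles $[a,x]\times [c,y]$, etc., whereas you iterate the one-dimensional Montgomery/Cheng identity first in $s$ and then in $t$; your one-dimensional identity and your kernel computations ($\int_a^b P(x,t)\,dt=0$, $\int_a^b\left\vert P(x,t)\right\vert dt=\frac{(x-a)^2+(b-x)^2}{4}$, hence the constant $\frac{1}{32}$) are all correct.

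The trouble sits precisely at the step you flagged as the main obstacle and then asserted without checking, namely that your identity ``equals the expression inside the absolute value.'' It does not, for the statement as printed. Carrying out your iteration, the last piece of the $s$-identity contributes
\begin{equation*}
-\frac{1}{d-c}\int_c^d\left( -\frac{1}{b-a}\int_a^b f(t,s)\,dt\right) ds=+\frac{1}{(b-a)(d-c)}\int_a^b\int_c^d f(t,s)\,ds\,dt,
\end{equation*}
so the double-integral term carries coefficient $\frac{1}{(b-a)(d-c)}$, not the printed $\frac{1}{2(b-a)(d-c)}$. Indeed the statement as printed is false: take $f(t,s)=ts$ on $[0,1]^2$ with $x=y=0$; then $\gamma =\Gamma =1$, so the right-hand side is $0$, while the printed left-hand side equals $\left\vert \frac{1}{4}-\frac{1}{4}-\frac{1}{4}+\frac{1}{8}\right\vert =\frac{1}{8}$. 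So your argument is the correct proof of the correct theorem --- the discrepancy is evidently a transcription error in this paper's quotation of \cite{ZEKI} --- but as a proof of the statement as printed it fails at exactly the bookkeeping you postponed: you should write out and verify the two-dimensional identity explicitly, and the final double-integral coefficient must be $\frac{1}{(b-a)(d-c)}$.
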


In \cite{QIO}, Qiaoling et.al. derived a new inequality of Ostrowski-type as
following

\begin{theorem}
Let $f:\left[ a,b\right] \times \left[ c,d\right] \rightarrow 
\mathbb{R}
$ be an absolutely continuous function such that the partial derivative of
order 2 exists and suppose that there exist constants $\gamma ,\Gamma \in 
\mathbb{R}
$ with $\gamma \leq \frac{\partial ^{2}f(t,s)}{\partial t\partial s}\leq
\Gamma $ for all $(t,s)\in \left[ a,b\right] \times \left[ c,d\right] .$
Then, we have%
\begin{eqnarray*}
&&\left\vert (1-\lambda )^{2}f(x,y)+\frac{\lambda }{2}(1-\lambda )\left[
f(a,y)+f(b,y)+f(x,c)+f(x,d)\right] \right. \\
&&+\left( \frac{\lambda }{2}\right) ^{2}\left[ f(a,c)+f(b,c)+f(a,d)+f(b,d)%
\right] \\
&&-\frac{1}{b-a}\left\{ (1-\lambda )\dint\limits_{a}^{b}f(t,y)dt+\frac{%
\lambda }{2}\dint\limits_{a}^{b}\left[ f(t,c)+f(t,d)\right] dt\right\} \\
&&-\frac{1}{d-c}\left\{ (1-\lambda )\dint\limits_{c}^{d}f(x,s)ds+\frac{%
\lambda }{2}\dint\limits_{c}^{d}\left[ f(a,s)+f(b,s)\right] ds\right\} \\
&&\left. -\frac{\Gamma +\gamma }{2}(1-\lambda )^{2}\left( x-\frac{a+b}{2}%
\right) \left( y-\frac{c+d}{2}\right) +\frac{1}{(b-a)(d-c)}%
\dint\limits_{a}^{b}\dint\limits_{c}^{d}f(t,s)dsdt\right\vert \\
&\leq &\frac{\Gamma -\gamma }{2}\frac{1}{(b-a)(d-c)}\left[ \left( \lambda
^{2}+(1-\lambda )^{2}\right) \frac{(b-a)^{2}}{4}+\left( x-\frac{a+b}{2}%
\right) ^{2}\right] \\
&&\times \left[ \left( \lambda ^{2}+(1-\lambda )^{2}\right) \frac{(d-c)^{2}}{%
4}+\left( y-\frac{c+d}{2}\right) ^{2}\right]
\end{eqnarray*}%
for all $(x,y)\in \left[ a+\lambda \frac{b-a}{2},b-\lambda \frac{b-a}{2}%
\right] \times \left[ c+\lambda \frac{d-c}{2},d-\lambda \frac{d-c}{2}\right] 
$ and $\lambda \in \lbrack 0,1].$
\end{theorem}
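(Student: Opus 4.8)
The plan is to prove the inequality by the Peano-kernel technique adapted to the product structure of the rectangle $[a,b]\times[c,d]$. First I would introduce the parametrized one-dimensional kernel
\[
p(x,t)=
\begin{cases}
t-\left(a+\lambda\dfrac{b-a}{2}\right), & t\in[a,x],\\
t-\left(b-\lambda\dfrac{b-a}{2}\right), & t\in(x,b],
\end{cases}
\]
together with the analogous kernel $q(y,s)$ on $[c,d]$ (replace $a,b,x$ by $c,d,y$), and then work with the product $K(x,y,t,s)=p(x,t)\,q(y,s)$. The hypotheses $x\in\left[a+\lambda\frac{b-a}{2},\,b-\lambda\frac{b-a}{2}\right]$ and its analogue for $y$ guarantee that the two breakpoints of each kernel straddle the evaluation point, which is precisely what fixes the sign pattern needed in the last step.

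The core is an integral identity. I would first establish the one-variable parametrized identity, by integration by parts,
\[
\frac{1}{b-a}\int_a^b p(x,t)\,g'(t)\,dt=(1-\lambda)g(x)+\frac{\lambda}{2}\bigl[g(a)+g(b)\bigr]-\frac{1}{b-a}\int_a^b g(t)\,dt,
\]
valid for any absolutely continuous $g$; the boundary terms at $a,b$ together with the jump of $p$ at $t=x$ produce exactly these three ``sampling'' terms. Applying this first in $t$ with $g(t)=\partial f/\partial s\,(t,s)$, then multiplying by $q(y,s)$ and applying it again in $s$ (invoking Fubini), I obtain
\[
\frac{1}{(b-a)(d-c)}\int_a^b\int_c^d p(x,t)\,q(y,s)\,\frac{\partial^{2}f}{\partial t\partial s}\,ds\,dt=E_0,
\]
where $E_0$ is the entire expression inside the absolute value of the claim \emph{except} for the term $-\frac{\Gamma+\gamma}{2}(1-\lambda)^2\bigl(x-\frac{a+b}{2}\bigr)\bigl(y-\frac{c+d}{2}\bigr)$. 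Expanding the product of the two three-term sampling operators reproduces each of the nine pieces of $E_0$: the $(1-\lambda)^2 f(x,y)$ term, the mixed sums such as $\frac{\lambda}{2}(1-\lambda)[f(a,y)+\cdots]$, the single-variable averages (each inheriting a minus sign), and the double average (minus times minus gives the $+\frac{1}{(b-a)(d-c)}\int_a^b\int_c^d f$ term).

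The decisive step is to center the second derivative about $\frac{\Gamma+\gamma}{2}$. A direct computation gives $\int_a^b p(x,t)\,dt=(1-\lambda)(b-a)\bigl(x-\frac{a+b}{2}\bigr)$ and the analogue for $q$, so
\[
\frac{\Gamma+\gamma}{2}\cdot\frac{1}{(b-a)(d-c)}\int_a^b\int_c^d p(x,t)q(y,s)\,ds\,dt=\frac{\Gamma+\gamma}{2}(1-\lambda)^2\Bigl(x-\frac{a+b}{2}\Bigr)\Bigl(y-\frac{c+d}{2}\Bigr),
\]
which is exactly the missing term. Subtracting this from the identity replaces $\partial^{2}f/\partial t\partial s$ by $\partial^{2}f/\partial t\partial s-\frac{\Gamma+\gamma}{2}$ and turns $E_0$ into the full left-hand side $E$ of the claim. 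The bound $\gamma\le\partial^{2}f/\partial t\partial s\le\Gamma$ then gives $\bigl|\partial^{2}f/\partial t\partial s-\frac{\Gamma+\gamma}{2}\bigr|\le\frac{\Gamma-\gamma}{2}$, whence
\[
|E|\le\frac{\Gamma-\gamma}{2}\,\frac{1}{(b-a)(d-c)}\left(\int_a^b|p(x,t)|\,dt\right)\left(\int_c^d|q(y,s)|\,ds\right).
\]

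It remains to evaluate the absolute-value integrals. Splitting at the two breakpoints $a+\lambda\frac{b-a}{2}$ and $b-\lambda\frac{b-a}{2}$ — whose ordering relative to $x$ is guaranteed by the domain restriction — and integrating the four monotone pieces, I expect
\[
\int_a^b|p(x,t)|\,dt=\bigl(\lambda^2+(1-\lambda)^2\bigr)\frac{(b-a)^2}{4}+\Bigl(x-\frac{a+b}{2}\Bigr)^2,
\]
with the analogous value for $q$; their product is precisely the right-hand side of the claim. The main obstacle is the identity of the second paragraph: the iterated integration by parts must be carried out so that every boundary and jump contribution lands in the correct one of the many terms of $E_0$, and one must check that the correction produced by centering coincides with the $\frac{\Gamma+\gamma}{2}$-term already present on the left. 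The absolute-value computation, though routine, also rests on the domain restriction on $(x,y)$, without which the sign analysis — and hence the clean closed form — would break down.
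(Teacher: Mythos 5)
Your proposal is correct, and it follows exactly the kernel--identity--centering--bounding scheme that this paper itself employs in Section 2 for its main theorem: define piecewise Peano kernels $p(x,t)$, $q(y,s)$, derive the sampling identity by integration by parts, subtract $\frac{\Gamma+\gamma}{2}\iint p\,q$ to produce the centered term, and bound via $\bigl|\frac{\partial^{2}f}{\partial t\partial s}-\frac{\Gamma+\gamma}{2}\bigr|\leq\frac{\Gamma-\gamma}{2}$ together with the closed-form evaluation of $\iint|p\,q|$. (Note that the paper quotes this particular statement from the reference of Qiaoling et al.\ without reproducing its proof, but your $\lambda$-parametrized kernels are the natural generalization of the fixed quarter-point kernels $t-\frac{3a+x}{4}$, $t-\frac{3b+x}{4}$ used in the paper's own argument, and all of your computations check out.)
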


In this paper, we proved a new Ostrowski-type inequality involving functions
of two independent variables as above.

\section{MAIN\ RESULT}

\begin{theorem}
Let $f:\left[ a,b\right] \times \left[ c,d\right] \rightarrow 
\mathbb{R}
$ be an absolutely continuous function such that the partial derivative of
order 2 exists and supposes that there exist constants $\gamma ,\Gamma \in 
\mathbb{R}
$ with $\gamma \leq \frac{\partial ^{2}f(t,s)}{\partial t\partial s}\leq
\Gamma $ for all $(t,s)\in \left[ a,b\right] \times \left[ c,d\right] .$
Then, we have%
\begin{eqnarray}
&&\left\vert \frac{1}{16}Kf(x,y)+\frac{1}{16}H(x,y)\right.  \notag \\
&&-\frac{1}{4\left( b-a\right) \left( d-c\right) }\dint\limits_{c}^{d}\left[
3(x-a)f(x,s)-(b-x)f(x,s)\right] ds  \notag \\
&&-\frac{1}{4\left( b-a\right) \left( d-c\right) }\dint\limits_{a}^{b}\left[
3(y-c)f(t,y)-(d-y)f(t,y)\right] dt  \notag \\
&&-\frac{1}{4\left( b-a\right) \left( d-c\right) }\dint\limits_{c}^{d}\left[
3(b-x)f(b,s)-(x-a)f(a,s)\right] ds  \label{2.1} \\
&&-\frac{1}{4\left( b-a\right) \left( d-c\right) }\dint\limits_{a}^{b}\left[
3(d-y)f(t,d)-(y-c)f(t,c)\right] dt  \notag \\
&&-\frac{\left[ \left( y-c\right) ^{2}-\left( d-y\right) ^{2}\right] \left[
\left( x-a\right) ^{2}-\left( b-x\right) ^{2}\right] }{32\left( b-a\right)
\left( d-c\right) }\left( \Gamma +\gamma \right)  \notag \\
&&\left. +\frac{1}{\left( b-a\right) \left( d-c\right) }\dint\limits_{a}^{b}%
\dint\limits_{c}^{d}f(t,s)dsdt\right\vert  \notag \\
&\leq &\frac{25\left[ \left( y-c\right) ^{2}+\left( d-y\right) ^{2}\right] %
\left[ \left( x-a\right) ^{2}+\left( b-x\right) ^{2}\right] }{512\left(
b-a\right) \left( d-c\right) }\left( \Gamma -\gamma \right)  \notag
\end{eqnarray}%
for all $(x,y)\in \left[ a,b\right] \times \left[ c,d\right] ,$ where%
\begin{eqnarray*}
H(x,y) &=&\frac{\left[ 3(b-x)f(b,y)-(x-a)f(a,y)\right] \left(
3(y-c)-(d-y)\right) }{(b-a)(d-c)} \\
&&+\frac{\left[ 3(d-y)f(x,d)-(y-c)f(x,c)\right] \left( 3(x-a)-(b-x)\right) }{%
(b-a)(d-c)} \\
&&+\frac{\left[ (y-c)f(a,c)-3(d-y)f(a,d)\right] (x-a)}{(b-a)(d-c)} \\
&&+\frac{\left[ 3(d-y)f(b,d)-(y-c)f(b,c)\right] (b-x)}{(b-a)(d-c)}
\end{eqnarray*}%
and%
\begin{equation*}
K=\frac{\left[ \left( 3(x-a)-(b-x)\right) \left( 3(y-c)-(d-y)\right) \right] 
}{\left( b-a\right) \left( d-c\right) }.
\end{equation*}
\end{theorem}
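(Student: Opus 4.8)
The plan is to prove the inequality by the two–dimensional Peano–kernel technique, which is the natural vehicle for all the Ostrowski-type estimates quoted in the introduction. The $3:1$ weighting visible everywhere in the statement (the repeated blocks $3(x-a)-(b-x)$ and $3(y-c)-(d-y)$) points to quarter-point kernels, so I would set
\[
p(x,t)=\begin{cases} t-\dfrac{3a+x}{4}, & a\le t\le x,\\[2mm] t-\dfrac{x+3b}{4}, & x<t\le b,\end{cases}
\qquad
q(y,s)=\begin{cases} s-\dfrac{3c+y}{4}, & c\le s\le y,\\[2mm] s-\dfrac{y+3d}{4}, & y<s\le d,\end{cases}
\]
and work throughout with the product kernel $p(x,t)\,q(y,s)$. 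Everything will be extracted from the single quantity
\[
\frac{1}{(b-a)(d-c)}\int_a^b\int_c^d p(x,t)\,q(y,s)\,\frac{\partial^2 f(t,s)}{\partial t\,\partial s}\,ds\,dt .
\]

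The first real step is to evaluate this double integral by integrating by parts twice, once in $t$ against $p(x,\cdot)$ and once in $s$ against $q(y,\cdot)$. Each single integration by parts converts $\int p\,\partial_t(\cdot)$ into an interior-node (jump) term at $t=x$, two endpoint terms at $t=a$ and $t=b$, and the averaged integral $-\tfrac{1}{b-a}\int_a^b(\cdot)\,dt$. Iterating in $s$ then produces the point value at $(x,y)$, the combination of corner values $f(a,c),f(a,d),f(b,c),f(b,d)$ and mixed boundary values that is meant to assemble into $H(x,y)$ and $Kf(x,y)$, the four line integrals $\int_c^d f(x,s)\,ds$, $\int_a^b f(t,y)\,dt$, $\int_c^d f(a,s),f(b,s)\,ds$, $\int_a^b f(t,c),f(t,d)\,dt$, and finally the normalised double integral $\tfrac{1}{(b-a)(d-c)}\int_a^b\int_c^d f$. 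I expect this collection of terms to be the main obstacle: it is pure bookkeeping, but every endpoint weight $\tfrac{3a+x}{4}$, $\tfrac{x+3b}{4}$ (and their $q$-analogues) must be routed into exactly the right slot of $H$ and $K$, and the sign and placement of each boundary contribution have to be tracked with care.

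Next I would exploit the hypothesis $\gamma\le \partial^2 f/\partial t\,\partial s\le\Gamma$ by splitting the mixed derivative as
\[
\frac{\partial^2 f}{\partial t\,\partial s}=\Bigl(\frac{\partial^2 f}{\partial t\,\partial s}-\frac{\Gamma+\gamma}{2}\Bigr)+\frac{\Gamma+\gamma}{2},
\]
so that the centred part has modulus at most $\tfrac{\Gamma-\gamma}{2}$. Feeding the constant piece $\tfrac{\Gamma+\gamma}{2}$ back into the identity produces precisely the extra term carrying $(\Gamma+\gamma)$ on the left of \eqref{2.1}, because a direct computation gives $\int_a^b p(x,t)\,dt=\tfrac{(x-a)^2-(b-x)^2}{4}$ and $\int_c^d q(y,s)\,ds=\tfrac{(y-c)^2-(d-y)^2}{4}$, whence $\tfrac{\Gamma+\gamma}{2(b-a)(d-c)}\int_a^b p\,dt\int_c^d q\,ds=\tfrac{[(y-c)^2-(d-y)^2][(x-a)^2-(b-x)^2]}{32(b-a)(d-c)}(\Gamma+\gamma)$, matching the stated coefficient.

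Finally I would take absolute values in the identity and bound the centred integral by $\tfrac{\Gamma-\gamma}{2}$ times the product of the $L^1$-norms of the kernels,
\[
\Bigl|\,\cdots\Bigr|\le \frac{\Gamma-\gamma}{2}\cdot\frac{1}{(b-a)(d-c)}\int_a^b|p(x,t)|\,dt\int_c^d|q(y,s)|\,ds .
\]
The piecewise-linear structure makes each norm a sum of two triangle areas: since $A_p-a=\tfrac{x-a}{4}$, $x-A_p=\tfrac{3(x-a)}{4}$ and $B_p-x=\tfrac{3(b-x)}{4}$, $b-B_p=\tfrac{b-x}{4}$, one gets $\int_a^b|p(x,t)|\,dt=\tfrac{5\left[(x-a)^2+(b-x)^2\right]}{16}$ and likewise $\int_c^d|q(y,s)|\,ds=\tfrac{5\left[(y-c)^2+(d-y)^2\right]}{16}$. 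Multiplying these yields the constant $\tfrac{25}{512}$ and reproduces the right-hand side of \eqref{2.1} exactly, which completes the argument.
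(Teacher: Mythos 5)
Your outline reproduces the paper's own method step for step: the same quarter-point kernels, the same centering at $M=\frac{\Gamma +\gamma }{2}$, and the same $L^{1}$ bound, and the computations you do carry out are correct ($\int_{a}^{b}p(x,t)\,dt=\frac{(x-a)^{2}-(b-x)^{2}}{4}$, $\int_{a}^{b}\left\vert p(x,t)\right\vert dt=\frac{5\left[ (x-a)^{2}+(b-x)^{2}\right] }{16}$, and their $q$-analogues, which produce the constants $\frac{1}{32}$ and $\frac{25}{512}$). The genuine gap is the step you set aside as ``pure bookkeeping'': the claim that the double integration by parts assembles into $\frac{1}{16}Kf(x,y)+\frac{1}{16}H(x,y)$ plus the four bracketed single integrals of the statement. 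It does not, and no routing of terms can make it do so. Since $p(x,\cdot )$ vanishes at the quarter point $\frac{3a+x}{4}$, its boundary values are $p(x,x)=\frac{3(x-a)}{4}$ and $p(x,a)=-\frac{x-a}{4}$, so
\begin{equation*}
\int_{a}^{x}p(x,t)\,\frac{\partial g}{\partial t}\,dt=\frac{3(x-a)}{4}g(x)+\frac{x-a}{4}g(a)-\int_{a}^{x}g(t)\,dt,
\end{equation*}
with a \emph{plus} sign on $g(a)$, and similarly on $(x,b]$. Adding the two pieces and iterating in $s$, every point value of $f$ enters with a positive coefficient: the coefficient of $f(x,y)$ is $\frac{3(b-a)}{4}\cdot \frac{3(d-c)}{4}=\frac{9(b-a)(d-c)}{16}$, whereas the statement requires $\frac{1}{16}\left( 3(x-a)-(b-x)\right) \left( 3(y-c)-(d-y)\right) $; likewise the line-integral block comes out as $-\frac{3(b-a)}{4}\int_{c}^{d}f(x,s)\,ds-\frac{x-a}{4}\int_{c}^{d}f(a,s)\,ds-\frac{b-x}{4}\int_{c}^{d}f(b,s)\,ds$ together with its $t$-analogue, not the bracketed combinations in (2.1). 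The paper itself commits exactly this sign error in (2.3)--(2.6) (it writes $9f(x,y)-3f(x,c)-3f(a,y)+f(a,c)$ where all signs should be $+$), and the error propagates into (2.7) and into the theorem.

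Consequently the statement you were asked to prove is false, so your plan cannot be completed as written: take $f(t,s)=ts$ on $[0,1]\times \lbrack 0,1]$ and $x=a=0$, $y=c=0$, with $\gamma =\Gamma =1$. The right-hand side of (2.1) is then $0$, while on the left $K=1$, $f(0,0)=0$, $H(0,0)=3$, the four single-integral terms contribute $0$, $0$, $-\frac{3}{8}$, $-\frac{3}{8}$, the $(\Gamma +\gamma )$-term contributes $-\frac{1}{16}$, and the double integral contributes $\frac{1}{4}$, so the left-hand side equals $\left\vert \frac{3}{16}-\frac{3}{8}-\frac{3}{8}-\frac{1}{16}+\frac{1}{4}\right\vert =\frac{3}{8}>0$. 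What your (otherwise sound) outline actually proves is the corrected inequality in which $K$, $H$ and the bracketed integrands are replaced by the all-positive-weight combinations coming from the true integration-by-parts identity; with those replacements, the centering argument and the $L^{1}$ estimate with constant $\frac{25}{512}$ go through verbatim.
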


\begin{proof}
We define the functions: $p:\left[ a,b\right] ^{2}\rightarrow 
\mathbb{R}
$ and $q:[c,d]^{2}\rightarrow 
\mathbb{R}
$ as following%
\begin{equation*}
p(x,t)=\left\{ 
\begin{array}{c}
t-\frac{3a+x}{4}\text{ \ \ \ },t\in \lbrack a,x] \\ 
\\ 
t-\frac{3b+x}{4}\text{ \ \ \ },t\in (x,b]%
\end{array}%
\right.
\end{equation*}%
and%
\begin{equation*}
q(y,s)=\left\{ 
\begin{array}{c}
s-\frac{3c+y}{4}\text{ \ \ \ },s\in \lbrack c,y] \\ 
\\ 
s-\frac{3d+y}{4}\text{ \ \ \ },s\in (y,d]%
\end{array}%
\right.
\end{equation*}%
From definitions of $\ p(x,t)$ and $q(y,s)$, we can write%
\begin{eqnarray}
&&\dint\limits_{a}^{b}\dint\limits_{c}^{d}p(x,t)q(y,s)\frac{\partial
^{2}f(t,s)}{\partial t\partial s}dsdt  \label{2.2} \\
&=&\dint\limits_{a}^{x}\dint\limits_{c}^{y}\left( t-\frac{3a+x}{4}\right)
\left( s-\frac{3c+y}{4}\right) \frac{\partial ^{2}f(t,s)}{\partial t\partial
s}dsdt  \notag \\
&&+\dint\limits_{a}^{x}\dint\limits_{y}^{d}\left( t-\frac{3a+x}{4}\right)
\left( s-\frac{3d+y}{4}\right) \frac{\partial ^{2}f(t,s)}{\partial t\partial
s}dsdt  \notag \\
&&+\dint\limits_{x}^{b}\dint\limits_{c}^{y}\left( t-\frac{3b+x}{4}\right)
\left( s-\frac{3c+y}{4}\right) \frac{\partial ^{2}f(t,s)}{\partial t\partial
s}dsdt  \notag \\
&&+\dint\limits_{x}^{b}\dint\limits_{y}^{d}\left( t-\frac{3b+x}{4}\right)
\left( s-\frac{3d+y}{4}\right) \frac{\partial ^{2}f(t,s)}{\partial t\partial
s}dsdt  \notag
\end{eqnarray}%
Computing each integral of right hand side of (\ref{2.2}), we have%
\begin{eqnarray}
&&\dint\limits_{a}^{x}\dint\limits_{c}^{y}\left( t-\frac{3a+x}{4}\right)
\left( s-\frac{3c+y}{4}\right) \frac{\partial ^{2}f(t,s)}{\partial t\partial
s}dsdt  \label{2.3} \\
&=&\frac{(x-a)(y-c)}{16}\left[ 9f(x,y)-3f(x,c)-3f(a,y)+f(a,c)\right]  \notag
\\
&&-\frac{(x-a)}{4}\dint\limits_{c}^{y}\left[ 3f(x,s)-f(a,s)\right] ds-\frac{%
(y-c)}{4}\dint\limits_{a}^{x}\left[ 3f(t,y)-f(t,c)\right] dt  \notag \\
&&+\dint\limits_{a}^{x}\dint\limits_{c}^{y}f(t,s)dsdt  \notag
\end{eqnarray}%
\begin{eqnarray}
&&\dint\limits_{a}^{x}\dint\limits_{y}^{d}\left( t-\frac{3a+x}{4}\right)
\left( s-\frac{3d+y}{4}\right) \frac{\partial ^{2}f(t,s)}{\partial t\partial
s}dsdt  \label{2.4} \\
&=&\frac{(x-a)(d-y)}{16}\left[ 9f(x,d)-3f(x,y)-3f(a,d)+f(a,y)\right]  \notag
\\
&&-\frac{(x-a)}{4}\dint\limits_{y}^{d}\left[ 3f(x,s)-f(a,s)\right] ds-\frac{%
(d-y)}{4}\dint\limits_{a}^{x}\left[ 3f(t,d)-f(t,y)\right] dt  \notag \\
&&+\dint\limits_{a}^{x}\dint\limits_{y}^{d}f(t,s)dsdt  \notag
\end{eqnarray}%
\begin{eqnarray}
&&\dint\limits_{x}^{b}\dint\limits_{c}^{y}\left( t-\frac{3b+x}{4}\right)
\left( s-\frac{3c+y}{4}\right) \frac{\partial ^{2}f(t,s)}{\partial t\partial
s}dsdt  \label{2.5} \\
&=&\frac{(b-x)(y-c)}{16}\left[ 9f(b,y)-3f(b,c)-3f(x,y)+f(x,c)\right]  \notag
\\
&&-\frac{(b-x)}{4}\dint\limits_{c}^{y}\left[ 3f(b,s)-f(x,s)\right] ds-\frac{%
(y-c)}{4}\dint\limits_{x}^{b}\left[ 3f(t,y)-f(t,c)\right] dt  \notag \\
&&+\dint\limits_{x}^{b}\dint\limits_{c}^{y}f(t,s)dsdt  \notag
\end{eqnarray}%
\begin{eqnarray}
&&\dint\limits_{x}^{b}\dint\limits_{y}^{d}\left( t-\frac{3b+x}{4}\right)
\left( s-\frac{3d+y}{4}\right) \frac{\partial ^{2}f(t,s)}{\partial t\partial
s}dsdt  \label{2.6} \\
&=&\frac{(b-x)(d-y)}{16}\left[ 9f(b,d)-3f(b,y)-3f(x,d)+f(x,y)\right]  \notag
\\
&&-\frac{(b-x)}{4}\dint\limits_{y}^{d}\left[ 3f(b,s)-f(x,s)\right] ds-\frac{%
(d-y)}{4}\dint\limits_{x}^{b}\left[ 3f(t,d)-f(t,y)\right] dt  \notag \\
&&+\dint\limits_{x}^{b}\dint\limits_{y}^{d}f(t,s)dsdt  \notag
\end{eqnarray}%
By using these inequalities in (\ref{2.2}), we get%
\begin{eqnarray}
&&\dint\limits_{a}^{b}\dint\limits_{c}^{d}p\left( x,t\right) q\left(
y,s\right) \frac{\partial ^{2}f(t,s)}{\partial t\partial s}dsdt  \notag \\
&=&\frac{1}{16}\left\{ \left[ \left( 3(x-a)-(b-x)\right) \left(
3(y-c)-(d-y)\right) \right] f(x,y)\right.  \notag \\
&&+\left[ 3(b-x)f(b,y)-(x-a)f(a,y)\right] \left( 3(y-c)-(d-y)\right)  \notag
\\
&&+\left[ 3(d-y)f(x,d)-(y-c)f(x,c)\right] \left( 3(x-a)-(b-x)\right)  \notag
\\
&&+\left[ (y-c)f(a,c)-3(d-y)f(a,d)\right] (x-a)  \notag \\
&&\left. +\left[ 3(d-y)f(b,d)-(y-c)f(b,c)\right] (b-x)\right\}  \label{2.7}
\\
&&-\frac{1}{4}\dint\limits_{c}^{d}\left[ 3(x-a)f(x,s)-(b-x)f(x,s)\right] ds 
\notag \\
&&-\frac{1}{4}\dint\limits_{a}^{b}\left[ 3(y-c)f(t,y)-(d-y)f(t,y)\right] dt 
\notag \\
&&-\frac{1}{4}\dint\limits_{c}^{d}\left[ 3(b-x)f(b,s)-(x-a)f(a,s)\right] ds 
\notag \\
&&-\frac{1}{4}\dint\limits_{a}^{b}\left[ 3(d-y)f(t,d)-(y-c)f(t,c)\right] dt 
\notag \\
&&+\dint\limits_{a}^{b}\dint\limits_{c}^{d}f(t,s)dsdt  \notag
\end{eqnarray}%
We\ also\ have%
\begin{equation}
\dint\limits_{a}^{b}\dint\limits_{c}^{d}p\left( x,t\right) q\left(
y,s\right) dsdt=\frac{\left[ \left( y-c\right) ^{2}-\left( d-y\right) ^{2}%
\right] \left[ \left( x-a\right) ^{2}-\left( b-x\right) ^{2}\right] }{16}
\label{2.8}
\end{equation}%
Let $M=\frac{\Gamma +\gamma }{2}.$ From (\ref{2.7}) and (\ref{2.8}), we can
write%
\begin{eqnarray}
&&\dint\limits_{a}^{b}\dint\limits_{c}^{d}p\left( x,t\right) q\left(
y,s\right) \left[ \frac{\partial ^{2}f(t,s)}{\partial t\partial s}-M\right]
dsdt  \label{2.9} \\
&=&\dint\limits_{a}^{b}\dint\limits_{c}^{d}p\left( x,t\right) q\left(
y,s\right) \frac{\partial ^{2}f(t,s)}{\partial t\partial s}dsdt  \notag \\
&&-\frac{\Gamma +\gamma }{2}\frac{\left[ \left( y-c\right) ^{2}-\left(
d-y\right) ^{2}\right] \left[ \left( x-a\right) ^{2}-\left( b-x\right) ^{2}%
\right] }{16}  \notag
\end{eqnarray}%
On the other hand, we have%
\begin{eqnarray}
&&\left\vert \dint\limits_{a}^{b}\dint\limits_{c}^{d}p\left( x,t\right)
q\left( y,s\right) \left[ \frac{\partial ^{2}f(t,s)}{\partial t\partial s}-M%
\right] dsdt\right\vert  \label{2.10} \\
&\leq &\max_{(t,s)\in \lbrack a,b]\times \lbrack c,d]}\left\vert \frac{%
\partial ^{2}f(t,s)}{\partial t\partial s}-M\right\vert
\dint\limits_{a}^{b}\dint\limits_{c}^{d}\left\vert p\left( x,t\right)
q\left( y,s\right) \right\vert dsdt  \notag
\end{eqnarray}%
Since%
\begin{equation}
\max_{(t,s)\in \lbrack a,b]\times \lbrack c,d]}\left\vert \frac{\partial
^{2}f(t,s)}{\partial t\partial s}-M\right\vert \leq \frac{\Gamma -\gamma }{2}
\label{2.11}
\end{equation}%
and%
\begin{equation}
\dint\limits_{a}^{b}\dint\limits_{c}^{d}\left\vert p\left( x,t\right)
q\left( y,s\right) \right\vert dsdt=\frac{25\left[ \left( y-c\right)
^{2}+\left( d-y\right) ^{2}\right] \left[ \left( x-a\right) ^{2}+\left(
b-x\right) ^{2}\right] }{256}  \label{2.12}
\end{equation}%
By using (\ref{2.11}) and (\ref{2.12}) in (\ref{2.10}), we get%
\begin{eqnarray}
&&\left\vert \dint\limits_{a}^{b}\dint\limits_{c}^{d}p\left( x,t\right)
q\left( y,s\right) \left[ \frac{\partial ^{2}f(t,s)}{\partial t\partial s}-M%
\right] dsdt\right\vert  \label{2.13} \\
&\leq &\frac{25\left[ \left( y-c\right) ^{2}+\left( d-y\right) ^{2}\right] %
\left[ \left( x-a\right) ^{2}+\left( b-x\right) ^{2}\right] }{512}\left(
\Gamma -\gamma \right)  \notag
\end{eqnarray}%
From (\ref{2.9}) and (\ref{2.13}), we get the required result.
\end{proof}

\end{document}